\numberwithin{equation}{section}
\numberwithin{table}{section}
\newtheorem{theorem}{Theorem}[section]
\newtheorem{corollary}{Corollary}
\newtheorem*{thm}{Theorem}
\newtheorem*{lemma}{Lemma}
\theoremstyle{definition}
\theoremstyle{remark}
\newtheorem{remark}[theorem]{Remark}
\newcommand{\Z}{\mathbb Z}
\newcommand{\N}{\mathbb N}
\newcommand{\F}{\mathbb F}
\renewcommand{\wr}{\operatorname{\,wr\,}}
\newcommand{\ient}{\widetilde{\textup{ent}}}
\newcommand{\ientd}{\widetilde{\textup{ent}}_{\dim}}
\begin{document}

   \title{\huge A property of the lamplighter group}
   \author{I.~Castellano \and
   G.~Corob Cook \and
   P.H.~Kropholler}
 
   \newcommand{\Addresses}{{
  \bigskip
  \footnotesize

I.~Castellano, \textsc{Dipartimento di Scienze Matematiche, Informatiche e Fisiche - University of Udine, Viale delle Scienze 206, Udine 33100 (Italy)} \textit{E-mail:} \texttt{ilaria.castellano88@gmail.com}

  \medskip

  G.~Corob Cook, \textsc{Ikerbasque - Basque Foundation for Science and Matematika Saila, UPV/EHU, Sarriena s/n, 48940, Leioa- Bizkaia (Spain)}
  \textit{E-mail:} \texttt{gcorobcook@gmail.com}

  \medskip

P.H.~Kropholler, \textsc{Mathematical Sciences - University of Southampton, Southampton SO17 1BJ (United Kingdom}
  \textit{E-mail:} \texttt{p.h.kropholler@soton.ac.uk}

}}
\date{}
\providecommand{\keywords}[1]{\textbf{\textit{Keywords:}} #1}

\maketitle

\begin{abstract} 
{We show that the inert subgroups of the lamplighter group fall into exactly five commensurability classes. 
The result is then connected with the theory of totally disconnected locally compact groups and with algebraic entropy. }
 \end{abstract}
 
 \keywords{lamplighter group, commensurate subgroups, inert subgroups, intrinsic dimension entropy, totally disconnected locally compact completions.}

A subgroup $H$ of a group $G$ is said to be \emph{inert} if $H$ and $g^{-1}Hg$ are \emph{commensurate} for all $g\in G$, meaning that $H\cap H^g$  always has finite index in both $H$ and $H^g$. The terminology  was introduced by Kegel and has been explored in many contexts (see, for example, the recent survey \cite{dardano_et_al2}). In abstract group theory, Robinson's investigation \cite{Robinson} focusses on soluble groups. Here we study a particular special family of soluble groups: \emph{the lamplighter groups} and our interest is in the connection with the theory of totally disconnected locally compact groups. In that context, inert subgroups are particularly important in the light of van Dantzig's theorem that every totally disconnected locally compact group has a compact open subgroup and of course all such subgroups are commensurate with one another and therefore inert. It should be noted that in recent literature it is common to use the term \emph{commensurated} in place of \emph{inert}, see for example \cite{MR3008315,MR3302969,MR3022552,MR3900762}.   In \S2 we remark a dynamical aspect of the property investigated here and relate it to the concept of  algebraic entropy.

The relation of commensurability is an equivalence relation amongst the subgroups of a group. By a \emph{class} we shall here mean an equivalence class of subgroups under this relation. For a prime $p$, the corresponding lamplighter group is the standard restricted wreath product $\F_p\wr\Z$, i.e., the standard restricted wreath product of a group of order $p$ by an infinite cyclic group. These are the simplest of soluble groups that fall outside the classes considered by Robinson \cite{Robinson}. Our main observation is as follows.

\begin{thm} 
The inert subgroups of the lamplighter group $\F_p\wr\Z$ fall into exactly five classes.
\end{thm}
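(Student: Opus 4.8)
Here I set $G=\F_p\wr\Z$, write its base group as $B=\bigoplus_{\Z}\F_p$, and identify $B$ with the Laurent polynomial ring $V=\F_p[t,t^{-1}]$ so that the generator $t$ of $\Z$ acts by conjugation as multiplication by $t$, i.e.\ the shift $\sigma$. Let $\pi\colon G\to\Z$ be the projection with kernel $B$. Since $B$ is abelian, conjugation by $B$ is trivial on subgroups of $B$, and since commensurability is an equivalence relation, a subgroup $H\le B$ is inert exactly when $H$ and $tH$ are commensurate. My plan is to split according to whether $\pi(H)=0$.

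Suppose first $\pi(H)=m\Z$ with $m\ge 1$. Then $K:=H\cap B$ is normal in $H$, and an element of $H$ mapping to $m$ acts on $K$ as $t^m$, so $t^mK=K$; thus $K$ is an $\F_p[t^m,t^{-m}]$-submodule of $V$. Conjugating a fixed $h=b_0t^m\in H$ by $b\in B$ changes its base component by $(1-t^m)b$, and computing $H\cap H^b$ shows that inertness forces, for every $b\in V$, the existence of some $j\ne 0$ with $(1-t^{mj})b\in K$. As $V$ is finitely generated over the principal ideal domain $\F_p[t^m,t^{-m}]$, this means every element of $V/K$ is annihilated by some $1-t^{mj}$, whence $(1-t^{mJ})V\subseteq K$ for a single $J$. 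Therefore $K$ has finite codimension in $B$, and $H$ is commensurate with $B\rtimes\langle t^m\rangle$, a finite-index subgroup of $G$; so all inert $H$ with $\pi(H)\ne 0$ form the single class of $G$.

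Now let $H\le B=V$. The engine is an elementary lemma: any $\F_p$-subspace $M\le V$ with $tM\subseteq M$ is an $\F_p[t]$-submodule, and is either bounded below — in which case it sits inside some $t^kV_{\ge 0}\cong\F_p[t]$ and, being an ideal of this principal ideal domain, is commensurate to $0$ or to $V_{\ge 0}:=\F_p[t]$ — or unbounded below, in which case factoring out the prime-to-$t$ content identifies it with a genuine $V$-ideal $hV$, hence commensurate to $0$ or to $V$; the mirror statement holds for $t^{-1}$-invariant subspaces, with $V_{\le 0}:=\F_p[t^{-1}]$. Given an inert $H$ I form the invariant cores $H^{+}_\infty=\bigcap_{n\ge 0}t^nH$ and $H^{-}_\infty=\bigcap_{n\ge 0}t^{-n}H$; a short index estimate using $\dim H/(H\cap tH)<\infty$ shows $H^{+}_\infty$ is $t^{-1}$-invariant and $H^{-}_\infty$ is $t$-invariant, so by the lemma each is commensurate to one of $0$, $V_{\le 0}$, $V_{\ge 0}$, $V$. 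I will then show $H\sim H^{+}_\infty+H^{-}_\infty$; since the sum of any two of $\{0,V_{\ge 0},V_{\le 0},V\}$ is again commensurate to one of these four (for instance $V_{\ge 0}+V_{\le 0}\sim V$), this pins $H$ into one of the four classes $[0],[V_{\ge 0}],[V_{\le 0}],[V=B]$.

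The main obstacle is the reduction $H\sim H^{+}_\infty+H^{-}_\infty$: a priori $H/H^{+}_\infty$ is infinite dimensional, and one must see that adjoining $H^{-}_\infty$ fills all but finitely many dimensions. I will establish this by passing to the completions $\mathcal K_{+}=\F_p((t))$ and $\mathcal K_{-}=\F_p((t^{-1}))$ at the two ends of $\Z$ (the places $0,\infty$ of the rational function field), into whose product $V$ embeds as a cocompact $\F_p$-lattice; this is precisely the totally disconnected locally compact picture alluded to in the introduction. Inertness forces each closure $\overline H^{\pm}$ to be commensurated by the scaling $t$ in $\mathcal K_{\pm}$, and such closed subspaces fall into only four classes (trivial, compact-open, discrete-cocompact, full). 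Crucially $V$ is self-dual as a lattice, so the residue pairing identifies the commensurated subspaces at $0$ with those at $\infty$, linking the two local types — compact-open with discrete-cocompact, full with full, trivial with trivial — so that only the four ``diagonal'' combinations occur, matching $0,V_{\ge 0},V_{\le 0},V$. Finally, these four models are pairwise non-commensurate, since each pairwise intersection (e.g.\ $V_{\ge 0}\cap V_{\le 0}=\F_p$) has infinite codimension in both, and none is commensurate with $G$ because $B$ has infinite index in $G$; together with the single class of subgroups of nonzero image this yields exactly five classes.
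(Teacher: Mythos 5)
Your reduction to the base group is sound: the handling of subgroups with $\pi(H)=m\Z\neq 0$ (via $(t^{mj}-1)b\in K$ and the PID $\F_p[t^m,t^{-m}]$) is correct and even a little more self-contained than the paper's treatment of that case, the ``engine lemma'' on $t$-invariant subspaces of $V=\F_p[t,t^{-1}]$ is true, and the final non-commensurability checks are fine. The problem is the step you yourself flag as the main obstacle: $H\sim H^{+}_\infty+H^{-}_\infty$. This is not a technical loose end --- it is where the entire content of the theorem lives, and your sketch does not close it. The cores $H^{\pm}_\infty$ are infinite intersections $\bigcap_n t^{\pm n}H$; almost invariance bounds $\dim H/(H\cap tH)$ but gives no bound on $\dim H/(H\cap tH\cap\cdots\cap t^nH)$ as $n\to\infty$, so each core can a priori have infinite codimension in $H$ (already for $H$ of codimension $1$ in $V$, e.g.\ $H=\ker\mu$ with $\mu$ a functional whose shifts $\mu\circ t^{-n}$ are linearly independent), and nothing you say controls how much of that codimension the other core recovers. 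The proposed repair via the completions $\F_p((t))\times\F_p((t^{-1}))$ has two independent defects. First, its ingredients are asserted, not proved: the classification of closed $t$-commensurated subspaces of $\F_p((t))$ into four types, and the claim that self-duality of the lattice $V$ forces the two local types to pair up ``diagonally,'' are each at least as hard as the lemma you are trying to prove. Second, even granting both, they only constrain the closures $\overline{H}^{\pm}$, and a subspace of $V$ is not determined up to commensurability by its closures in the two completions (duality/annihilator arguments see only closed subgroups, while commensurability of $H$ with a model subspace is a statement about $H$ itself). So the chain from ``closures are diagonal'' back to ``$H$ is commensurate with one of $0,V_{\geq0},V_{\leq0},V$'' is missing.

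For contrast, the paper avoids infinite intersections and completions altogether: it works directly with the almost invariant subspace $A=H\cap B$, fixes a finite-dimensional $F$ with $Ax^{\pm1}\subseteq A+F$, and runs an elementary leading-term induction producing elements $a_j\in A$ of upper degree $n_++j$ whose lower degrees stabilise above $n_-$; this shows in one stroke that $A$ is almost equal to $0$, $B^+$, $B^-$ or $B$, with no need to first extract invariant cores. If you want to salvage your architecture, the honest route is to prove directly that an almost invariant subspace contains a subspace of the form ``$V_{\geq0}$ up to finite codimension'' as soon as it is unbounded above --- which is exactly the paper's degree argument --- rather than routing through $H^{\pm}_\infty$ and the adelic picture.
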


\section{Proof and application to locally compact groups}

Let $G$ be a group, $K$ a field and $KG$ the group algebra. Let $V$ be a $KG$-module. We say that a $K$-subspace $U$ of $V$ is \emph{$G$-almost invariant} when $U/U\cap Ug$ is finite dimensional for all $g\in G$. We say that subspaces $U$ and $W$ are \emph{almost equal} when $U/U\cap W$ and $W/U\cap W$ are both finite dimensional.

\begin{lemma}\label{lem:main}
Let $G=\langle x\rangle$ be infinite cyclic and $B=KG$. Let $B^+=K[x]$ and $B^-=K[x^{-1}]$. If $A$ is a $G$-almost invariant subspace of $B$ then $A$ is almost equal to one of the four subspaces $0$, $B^+$, $B^-$, $B$.
\end{lemma}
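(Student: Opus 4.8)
The plan is to recover $A$ from its behaviour at the two ends $+\infty$ and $-\infty$ of $\Z$. Write $U\approx W$ for ``$U$ and $W$ are almost equal'', and record two facts used throughout: $\approx$ is transitive, and it is preserved by any $K$-linear map $\phi$ (if $U\approx W$ then $\phi(U)\approx\phi(W)$, since $\phi(U)/(\phi(U)\cap\phi(W))$ is a quotient of $\phi(U)/\phi(U\cap W)$). As $G=\langle x\rangle$, $G$-almost invariance of $A$ is equivalent to $A\approx Ax$, hence $A\approx Ax^{n}$ for all $n$. Set $B^{<0}=\bigoplus_{n<0}Kx^n$, $B^{>0}=\bigoplus_{n>0}Kx^n$, and let $\pi_+\colon B\to B^+$, $\pi_-\colon B\to B^-$ be the projections with kernels $B^{<0}$, $B^{>0}$ respectively.

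The technical heart is a one-sided claim, which I would prove first: if $W\subseteq B^+=K[x]$ satisfies $W\approx xW$, then $W$ is finite-dimensional or of finite codimension in $B^+$. The decisive point is that on $K[x]$ the degree is bounded below, so leading-term reduction terminates. Letting $E=\{\deg f: 0\ne f\in W\}\subseteq\Z_{\ge0}$ and choosing monic $g_e\in W$ of degree $e$ for each $e\in E$, the family $(g_e)_{e\in E}$ is a basis of $W$ and the monomials $\{x^n: n\in\Z_{\ge0}\setminus E\}$ represent a basis of $B^+/W$; thus $\dim_K W=|E|$ and $\dim_K B^+/W=|\Z_{\ge0}\setminus E|$. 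Since the degree set of $xW$ is $E+1$, the hypothesis $W\approx xW$ forces $E\setminus(E+1)$ and $(E+1)\setminus E$ to be finite, i.e.\ membership in $E$ changes only finitely often; hence either $E$ is finite or $E\supseteq[N,\infty)$ for some $N$, which is exactly the stated dichotomy.

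Now $W:=\pi_+(A)$ satisfies the hypothesis: $x\,\pi_+(A)$ and $\pi_+(Ax)$ differ only in the coefficient of $x^0$, so $xW\approx\pi_+(Ax)\approx\pi_+(A)=W$, using $Ax\approx A$ and preservation of $\approx$ under $\pi_+$. Applying the claim to $\pi_+(A)$ and, symmetrically, to $\pi_-(A)$, each of these is finite-dimensional or cofinite, and I would split into four cases. Since $\pi_+(A)\cong A/(A\cap B^{<0})$, the alternative ``$\pi_+(A)$ finite'' says precisely $A\approx A\cap B^{<0}$, and ``$\pi_-(A)$ finite'' says $A\approx A\cap B^{>0}$. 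If both projections are finite, then $A\cap B^{<0}$ and $A\cap B^{>0}$ are cofinite in $A$ with zero intersection, forcing $\dim_K A<\infty$ and $A\approx 0$. If $\pi_+(A)$ is cofinite and $\pi_-(A)$ is finite, then $A\approx A\cap B^{>0}=:A_+\subseteq B^+$, whence $\pi_+(A_+)=A_+$ is cofinite in $B^+$ and $A\approx A_+\approx B^+$; the mirror-image case gives $A\approx B^-$.

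The main obstacle is the case where both $\pi_+(A)$ and $\pi_-(A)$ are cofinite, the target being $A\approx B$. From $B=B^+\oplus B^{<0}$ one has $\dim_K B/A=\dim_K B^+/\pi_+(A)+\dim_K B^{<0}/(A\cap B^{<0})$, so by cofiniteness of $\pi_+(A)$ it suffices to show $A\cap B^-$ is cofinite in $B^-$; and since, passing through the common subspace $A\cap Ax$, one checks $A\cap B^-\approx x^{-1}(A\cap B^-)$, the mirror of the one-sided claim reduces everything to excluding $\dim_K(A\cap B^-)<\infty$. This is the genuinely hard point, where almost invariance must enter essentially: the span of $\{x^e-x^{-e}\}$ shows that the projection hypotheses alone do not force cofiniteness. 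I would argue by contradiction. If $A\cap B^-$ were finite, cofiniteness of $\pi_+(A)$ lets me pick for each large $d$ an element $a_d\in A$ with $\pi_+(a_d)$ monic of degree $d$, whose negative tail $t_d:=a_d-\pi_+(a_d)\in B^{<0}$ is well defined modulo a fixed finite-dimensional space; the relation $Ax\approx A$ then forces $t_{d+1}$ to agree, up to a uniformly bounded error, with $x\,t_d$ truncated below degree $0$. Because multiplication by $x$ raises the order while the errors have support bounded below, the $t_d$ eventually lie in a single finite-dimensional space and cannot span a cofinite subspace of $B^{<0}$ — contradicting that $\pi_-(A)$ is cofinite. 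Hence $A\cap B^-$ is infinite-dimensional, so cofinite, and $A\approx B$, completing the classification.
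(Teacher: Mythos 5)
Your projection framework and the one-sided claim are correct and give a clean route to the cases $A\approx 0$, $A\approx B^+$, $A\approx B^-$: the degree-set argument showing that $W\approx xW$ forces $W\subseteq K[x]$ to be finite-dimensional or of finite codimension is sound, as are the verifications that $\pi_\pm(A)$ and $A\cap B^\pm$ inherit the relevant almost-invariance. This packaging (reading $A$ off from its two projections) is genuinely different from the paper, which instead constructs explicit sequences inside $A$ from the outset.

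The gap is in the last case, in two places. First, ``pick for each large $d$ an element $a_d$ with $\pi_+(a_d)$ monic of degree $d$'' followed by ``$Ax\approx A$ forces $t_{d+1}$ to agree, up to a uniformly bounded error, with $x\,t_d$ truncated'' is false for independently chosen $a_d$: two admissible choices of $a_{d+1}$ differ by an element of $A$ whose $\pi_+$-part has degree $\le d$, and nothing bounds the tail of that difference (your ``well defined modulo a fixed finite-dimensional space'' controls the tail only as a function of the entire $\pi_+$-part, not of its leading term). You must choose $a_{d+1}$ recursively, as $a_dx-f$ with $f$ in a fixed finite-dimensional $F$ satisfying $Ax\subseteq A+F$; then the lower degrees of the $t_d$ genuinely increase until they are trapped above the lower bound of the support of $F$ --- and this is precisely the construction in the paper's proof. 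Second, even granting that the $t_d$ eventually lie in a fixed finite-dimensional space, ``cannot span a cofinite subspace of $B^{<0}$'' does not by itself contradict cofiniteness of $\pi_-(A)$, since $\pi_-(A)$ is the image of all of $A$, not of the $a_d$ alone. You need the additional observation that, under the hypothesis $\dim_K(A\cap B^-)<\infty$, the span of the $a_d$ has finite codimension in $A$ (because $\pi_+(\operatorname{span}\{a_d\})$ is cofinite in $\pi_+(A)$ while the kernel $A\cap B^{<0}$ of $\pi_+$ restricted to $A$ is finite-dimensional), so that $\pi_-(A)$ really is almost spanned by the $\pi_-(a_d)$. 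With those two repairs the argument closes; alternatively, once the $a_d$ with eventually bounded tails exist you can drop the contradiction entirely and argue as the paper does: their span is a subspace of $A$ almost equal to $B^+$, the mirror construction gives one almost equal to $B^-$, and the sum of the two is almost equal to $B$.
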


\begin{proof}The group algebra $KG$ is a Laurent polynomial ring and each of its nonzero elements has a \emph{lower degree} and an \emph{upper degree} these being the least integer and the greatest integer for which the corresponding power of $x$ has non-zero coefficient.

Since $A$ is $G$-almost invariant, there is a finite dimensional subspace $F$ of $B$ such that $Ax$ and $Ax^{-1}$ are both contained in $A+F$. Let $n_+$ and $n_-$ denote the maximum and minimum integers in the finite set 
$$\{j;\ x^j\textrm{ belongs to the support of some non-zero element of }F\}.$$ 
We now distinguish two cases each of which has two subcases.
\begin{description}
\item[Case 1] $A$ has an element $a$ with upper degree $\ge n_+$.
\end{description}
By enlarging $F$ if necessary, we may assume $a$ has upper degree $n_+$. We will define a sequence $(a_j)_{j\ge0}$ of elements of $A$ inductively, starting with $a_0=a$, so that $a_j$ has upper degree $n_++j$ and so that for almost all $j$, $a_j$ has lower degree $\ge n_-$.

For $j>0$, we suppose that $a_{j-1}\in A$ has been chosen with upper degree $n_++j-1$. Then $a_{j-1}x$ has upper degree $n_++j$ and $a_{j-1}x=a_j+f$ for some $a_j$ in $A$ and $f$ in $F$. Since $n_++j>n_+$, the upper degree of $a_j$ is the same as that of $a_{j-1}x$, namely $n_++j$. The lower degree of $a_{j-1}x$ is $1$ greater than the lower degree of $a_{j-1}$ and so the lower degree of $a_j$ is either greater than that of $a_{j-1}$ or is $\ge n_-$. As a consequence the terms of the sequence $(a_j)$ eventually all have lower degree $\ge n_-$. Now the span of the $a_j$ is almost equal to $B^+$. 

\begin{description}
\item[Subcase 1a] $A$ has an element with lower degree $\le n_-$.
\end{description}

If this happens then the same reasoning as above produces a sequence with lower degrees decreasing by one and the terms of the sequence span a subspace almost equal to $B^-$. It follows that $A$ is almost equal to $B$.

\begin{description}
\item[Subcase 1b] All elements of $A$ have lower degree $> n_-$.
\end{description}

In this case, $A$ is almost equal to $B^+$.

\begin{description}
\item[Case 2] All elements of $A$ have upper degree $< n_+$.
\end{description}

Similar reasoning shows that either $A$ is finite dimensional or it is almost equal to $B^-$.
\end{proof}

\begin{remark}
Essentially the same strategy can be used to prove a more general result: suppose $R$ is a commutative noetherian ring, $G=\langle x\rangle$ is infinite cyclic, and $B=RG = \bigoplus_{n \in \Z} R$. If $A$ is a $G$-almost invariant $R$-submodule of $B$ then $A$ is almost equal to $\bigoplus_{n < 0} I \oplus \bigoplus_{n>0} J$, for some right ideals $I,J$ of $R$.
\end{remark}

\begin{proof}[Proof of the Theorem]
Let $B$ denote the base of the lamplighter group, i.e., $B$ is the infinite direct sum $\bigoplus_\Z \F_p$ of countably many copies of $\F_p$. This can be identified with the Laurent polynomial ring $\F_p[x,x^{-1}]$. If $H$ is an inert subgroup of $G$ then $H\cap B$ is an $\langle x\rangle$-almost invariant $\F_p$-subspace of $B$ and so is almost equal to one of $0$, $B^+$, $B^-$, $B$ by the above lemma. Commensuration and almost equality are the same thing here because the ground field is finite.

If $H$ has no elements of infinite order then $H\subseteq B$ and we are done. If $H$ has an element of infinite order and also an element of finite order then $H\cap B$ contains Laurent polynomials of arbitrarily large positive and arbitrarily large negative degrees. In this case $H\cap B$ has finite index in $B$ and $H$ has finite index in $G$. If $H$ has an element of infinite order and no elements of finite order then it is infinite cyclic and it is not commensurated.
\end{proof}
 %%%%%%%
 
We thank Pierre-Emmanuel Caprace for pointing out that one now has the following consequence.

\begin{corollary}\label{cor}
If $G$ is a totally disconnected locally compact group which has a dense subgroup isomorphic to a lamplighter group then $G$ is isomorphic to one of the following.
\begin{enumerate}
\item A discrete lamplighter group.
\item
A compact group.
\item
The group $\F_p((t))\rtimes_t\Z$ for some prime $p$.
\item
The unrestricted wreath product $\F_p\overline\wr\Z$ for some prime $p$.
\end{enumerate}
\end{corollary}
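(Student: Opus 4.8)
The plan is to transport the Theorem across the density hypothesis by means of van Dantzig's theorem. Write $L\le G$ for the dense copy of the lamplighter group, with $L=B\rtimes\langle x\rangle$, base $B=\bigoplus_\Z\F_p=\F_p[x,x^{-1}]$ and $x$ acting by the shift $e_n\mapsto e_{n+1}$, and fix a compact open subgroup $U\le G$. First I would show that $H:=U\cap L$ is an inert subgroup of $L$ with $\overline H=U$. Density of $L$ and openness of $U$ give that $H$ is dense in $U$, so $\overline H=U$ and the commensurability class of $H$ determines $U$. For inertness, note that all compact open subgroups of $G$ are mutually commensurate, so for $g\in L$ the open subgroup $V:=U\cap gUg^{-1}$ has finite index in $U$; since the finitely many $U$-cosets of $V$ meet $L$ in cosets of $V\cap L$, we get $[H:V\cap L]\le[U:V]<\infty$, and as $g\in L$ this says exactly that $H\cap gHg^{-1}$ has finite index in $H$, and symmetrically. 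The Theorem now puts $H$ in one of the five classes: the trivial one, those of $B^+$, $B^-$ and $B$, and that of the finite-index subgroups of $L$.

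Two of these are disposed of immediately. If $H$ is finite then $U=\overline H$ is finite, so $G$ has a finite open subgroup, hence is discrete; a dense discrete subgroup is everything, so $G=L$ and we are in case (1). If $H$ has finite index in $L$ then $U=\overline H$ has finite index in $\overline L=G$, so $G$ is compact and we are in case (2).

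The remaining classes I would treat through the action of $x$ on the closure $\overline B\trianglelefteq G$ of the base, which extends the shift. If $H$ is commensurate with $B^+$, I would replace $U$ by the commensurate compact open subgroup $\overline{B^+}$; then $x$ contracts $U$, with $[U:xUx^{-1}]=p$, the ascending union $\bigcup_k x^{-k}Ux^k$ is an open subgroup equal to $\overline B$, and the inverse limit $U\cong\varprojlim U/x^kUx^{-k}$ identifies $U$ with $\F_p[[t]]$ and $\overline B$ with $\F_p((t))$, on which $x$ acts by multiplication by $t$. Since $x$ acts nontrivially on the abelian group $\overline B$, it lies outside $\overline B$, so its image generates $G/\overline B\cong\Z$ and $G\cong\F_p((t))\rtimes_t\Z$, which is case (3); the class of $B^-$ yields the same group after the relabelling $t\mapsto t^{-1}$. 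If instead $H$ is commensurate with $B$, then $U=\overline B$ is already compact and $x$-invariant; identifying $U$ with $\prod_\Z\F_p$ carrying the shift and arguing as before that $G/U\cong\Z$ gives $G\cong\F_p\overline\wr\Z$, which is case (4).

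I expect the real work to be in the third paragraph: upgrading the commensurability class of $H$ to an exact description of the compact group $U=\overline H$ and of the global extension. Here the topology, not the abstract group theory, is decisive. One must verify that the nested family $\{x^kUx^{-k}\}$ is a neighbourhood basis of the identity, so that $U$ is genuinely the expected inverse limit and $\overline B$ the expected completion, and that the quotient by $\langle x\rangle$ is a discrete $\Z$ rather than a proper procyclic group. The natural framework is Willis' theory of the scale function and tidy subgroups: in the $B^{\pm}$ cases $U$ is tidy for $x$ and one of $x^{\pm1}$ has scale $p$, which forces the $\F_p((t))$ picture, whereas in the $B$ case $x$ has trivial scale and normalises $U$, forcing the unrestricted wreath product.
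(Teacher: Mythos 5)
Your overall route is genuinely different from the paper's. The paper justifies the Corollary by quoting the completion machinery: every totally disconnected locally compact completion of a pair $(H,I)$ is, by Reid--Wesolek, a quotient with compact kernel of the Belyaev completion of $(H,I)$, so one runs through the five classes of inert subgroups supplied by the Theorem. You instead work inside $G$ itself: you intersect a compact open subgroup $U$ with the dense lamplighter $L$, observe that $H=U\cap L$ is inert and dense in $U$, and then try to reconstruct $G$ from the commensurability class of $H$. Your first two paragraphs are correct: the inertness argument via $[H:V\cap L]\le[U:V]$ is fine, and the classes of the trivial subgroup and of $L$ do give cases (1) and (2).

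The problem is exactly the step you flag as ``the real work'', and it is not merely left unproved --- it fails. There is no reason for $\bigcap_k x^kUx^{-k}$ to be trivial, hence no reason for $\{x^kUx^{-k}\}$ to be a neighbourhood basis of the identity in $U$. Here is an explicit obstruction. Identify the base $B$ of $L$ with $\F_p[t,t^{-1}]$, so $B^+=\F_p[t]$, and let $\psi\colon L\to\F_p$ be the homomorphism sending $(f,n)$ to the coefficient sum $f(1)$ (well defined since $[L,L]=B(t-1)$). Let $G=\bigl(\F_p((t))\rtimes_t\Z\bigr)\times\F_p$ and let $\Gamma\le G$ be the graph of $\psi$. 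Then $\Gamma\cong L$, and $\Gamma$ is dense in $G$: every open subgroup of $\F_p((t))\rtimes_t\Z$ meets $L$ in a subgroup containing some $t^k\F_p[t]$, on which $\psi$ is already surjective, so $\overline\Gamma$ contains $1\times\F_p$ and hence all of $G$. For this completion one computes $U:=\overline{B^+}=\F_p[[t]]\times\F_p$ and $xUx^{-1}=t\F_p[[t]]\times\F_p$, so the index is $p$ exactly as in your third paragraph; but $\bigcap_kx^kUx^{-k}=\{0\}\times\F_p\neq 1$, the conjugates are not a neighbourhood basis, and $U$ is not the inverse limit you want. Willis's theory does not rescue the argument either: in this example $U$ is already tidy for $x$ with scale $p$, because tidiness and the scale are blind to the compact $x$-invariant subgroup $U_-=\bigcap_kx^kUx^{-k}$.

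Worse, this $G$ is not isomorphic to any group on the list: it is neither discrete nor compact; it has nontrivial centre, whereas $\F_p((t))\rtimes_t\Z$ has trivial centre; and it has no compact open normal subgroup (a compact open normal subgroup would meet $\F_p((t))\times 1$ in a compact open subgroup invariant under multiplication by $t^{\pm1}$, which does not exist), whereas $\F_p\overline\wr\Z$ has one. So the obstacle you run into is an obstacle to the literal statement, not just to your proof of it. What your strategy does prove --- and what one also gets by passing to the Schlichting completion, i.e.\ dividing $G$ by the core of $U$ --- is that $G$ has a compact normal subgroup $N$ such that $G/N$ is one of the four listed groups. If you want to salvage your write-up, I would prove that amended statement: your paragraphs one and two survive verbatim, and in paragraph three you should work in $G/N$ where $N$ is the intersection of all $L$-conjugates of $U$, for which the triviality of $\bigcap_kx^kUx^{-k}$ becomes available.
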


In order to justify Corollary~\ref{cor} we need a few preliminaries. Recall that a locally compact group $G$ is  totally disconnected if the identity $1_{_G}$ is its own connected component. For a totally disconnected locally compact group $G$, van Dantzig's theorem ensures that the family of all compact open subgroups of $G$ forms a base of neighbourhoods of $1_{_G}$. Therefore, every totally disconnected locally compact group $G$ has a distinguished class $\mathcal{CO}(G)$ of inert subgroups, namely its compact open subgroups.

For a discrete group $H$, let $\phi\colon H\to G$ be a group homomorphism with dense image in $G$. Such a homomorphism is referred to as a {\em totally disconnected locally compact completion of $H$} (a general framework for totally disconnected locally compact completions can be found in \cite{ReidWes}). Note that $\phi$ is not required to be injective. For every $U\in\mathcal{CO}(G)$, the preimage $\phi^{-1}(U)$  is inert in $H$. This observation connects  the inert subgroups of $H$ to its totally disconnected locally compact completions: given a completion $\phi\colon H\to G$, the set $\{\phi^{-1}(U)\mid U\in\mathcal{CO}(G)\}$ is contained in the commensurability class of an inert subgroup of $H$. Moreover, if we start with an inert subgroup $I$ of $H$, there are two canonical completions of $H$ such that $I$ is the preimage of a compact open subgroup of $G$: the {\em Belyaev completion} \cite{Bel} and the {\em Schlichting completion} \cite{Sch}. By \cite[Theorem~5.4]{ReidWes}, every totally disconnected locally compact completion $\phi$ of the pair $(H,I)$ arises as a quotient (with compact kernel) of the Belyaev completion of $H$ with respect to the inert subgroup $I$. Thus the problem of classifying all totally disconnected locally compact completions of $H$ can be broken into two steps:
\begin{enumerate}
\item classify inert subgroups of $H$ up to commensurability;
\item for each class form the Belyaev completion and classify its quotients with compact kernels.
\end{enumerate}

If we start with the lamplighter group, which is residually finite,  $\F_p\wr\Z$ densely embeds in its Belyaev completions; see \cite[Theorem~7.1]{Bel}. Therefore,  to obtain the exhaustive list in Corollary~\ref{cor} it suffices to form the Belyaev completion of each pair $(H,I)$ where $I$ represents one of the 5 classes of inert subgroups of $\F_p\wr\Z$. 

%%%%%%%%%%%%%%%%%
\section{Connection with algebraic entropy}
The concept of inert subgroup tacitly involves inner automorphisms and, therefore, it is amenable to being extended to the case of a general endomorphism $\varphi$ of a group $G$: a subgroup $H$ of $G$ is said to be {\em $\varphi$-inert} if $H^\varphi\cap H$ has finite index in the image $H^\varphi$ (see \cite{DGBV}). Consequently, a subgroup $H$ is inert in $G$ if $H$ is $\varphi$-inert for every inner automorphism $\varphi$ of $G$. The family of all $\varphi$-inert subgroups of $G$ is denoted by $\mathcal{I}_{\varphi}(G)$.

This definition can be easily adapted to the context of vector spaces:
 for an endomorphism $\phi$ of a $K$-vector space $V$,  a $K$-subspace $U$ of $V$ is  ({\em linearly}) {\em $\phi$-inert} if $\dim_K((U+\phi U)/U)<\infty$. Let $\mathcal{LI}_\phi(V)$ denote the family of all $\phi$-inert linear subspaces of $V$. Notice that $\mathcal{LI}_\phi(V)\subseteq\mathcal{I}_\phi(V)$ whenever $K$ is a finite field.

\medskip

The notion of $\phi$-inert subspace allows to point out a dynamical aspect of the Lemma above. Indeed, let 
\begin{equation}\label{eq:bernoulli}
\beta_K\colon\bigoplus_{n\in\Z} K\rightarrow\bigoplus_{n\in\Z} K,\quad (x_n)_{n\in\Z}\mapsto (x_{n-1})_{n\in\Z}.
\end{equation}
be the {\it two sided Bernoulli shift on $K$}. Then we have the following reformulation of the Lemma.
\begin{corollary}\label{cor2}
One has $\mathcal{LI}_{\beta_K}(V)\cap\mathcal{LI}_{\beta_K^{-1}}(V)=\{0,V^-,V^+,V\}$, where $$V= \bigoplus_{n\in\Z} K,\quad V^+=\bigoplus_{n\geq0} K\quad \text{and}\quad V^-=\bigoplus_{n\leq0} K.$$ 
\end{corollary}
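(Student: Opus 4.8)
The plan is to obtain the corollary from the Lemma through the dictionary identifying $V=\bigoplus_{n\in\Z}K$ with $KG=K[x,x^{-1}]$ (the basis vector in degree $n$ going to $x^n$), under which $\beta_K$ is multiplication by $x$, $\beta_K^{-1}$ is multiplication by $x^{-1}$, and $V^+,V^-$ become the subspaces $B^+=K[x]$, $B^-=K[x^{-1}]$ of the Lemma. Since algebraic entropy sees a subspace only through its commensurability class, I would read $\mathcal{LI}_{\beta_K}(V)$, $\mathcal{LI}_{\beta_K^{-1}}(V)$, and the asserted equality up to almost equality, so that the four listed subspaces are the representatives of the classes that occur. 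The first substantive step is to show that $U\in\mathcal{LI}_{\beta_K}(V)\cap\mathcal{LI}_{\beta_K^{-1}}(V)$ if and only if $U$ is $\langle x\rangle$-almost invariant. Using the isomorphism $(U+x^kU)/U\cong U/(U\cap x^{-k}U)$, the conditions of $\beta_K$-inertness and $\beta_K^{-1}$-inertness are exactly the almost-invariance conditions for $g=x^{-1}$ and for $g=x$ respectively; conversely these two instances propagate to all powers by the subadditivity estimate $\dim_K((U+x^{m+n}U)/U)\le\dim_K((U+x^mU)/U)+\dim_K((U+x^nU)/U)$, valid for all integers $m,n$, which follows by writing $x^nU\subseteq U+E_n$ with $\dim_K E_n=\dim_K((U+x^nU)/U)$ and applying $x^m$ to get $x^{m+n}U\subseteq x^mU+x^mE_n$.

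Granting this equivalence, both inclusions are short. For the inclusion into the four-element set, any $U$ in the intersection is $\langle x\rangle$-almost invariant, so the Lemma (with $B^{\pm}$ read as $V^{\pm}$) forces it to be almost equal to one of $0,V^-,V^+,V$. For the reverse inclusion I would verify directly that each of these four subspaces lies in both families: $\beta_KV^+\subseteq V^+$ while $(V^++\beta_K^{-1}V^+)/V^+$ is one-dimensional, symmetrically for $V^-$, and $0,V$ are fixed by both shifts.

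The step needing most care is the equivalence between two-sided inertness and $\langle x\rangle$-almost invariance, whose crux is the subadditivity estimate above; granting it, the inclusion into $\{0,V^-,V^+,V\}$ is immediate from the Lemma and the reverse inclusion is the direct check. I would emphasise, as already noted, that the equality is of the inert-subspace families taken up to commensurability, with the four subspaces serving as class representatives — and that over the finite field $\F_p$ this almost equality is precisely the commensurability of the corresponding subgroups of the lamplighter base exploited in the proof of the Theorem.
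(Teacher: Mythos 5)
Your proof is correct and takes essentially the same route as the paper, which presents the corollary as a direct reformulation of the Lemma under exactly the dictionary you describe ($V\cong K[x,x^{-1}]$, $\beta_K$ acting as multiplication by $x$, the equality read up to almost equality/commensurability). Your subadditivity argument upgrading the two instances $g=x^{\pm1}$ to full $\langle x\rangle$-almost invariance supplies a detail the paper leaves implicit (its proof of the Lemma in fact only uses a finite-dimensional $F$ with $Ax,Ax^{-1}\subseteq A+F$, i.e.\ precisely the $g=x^{\pm1}$ instances), so nothing is amiss.
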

 Several different notions of algebraic entropy have been introduced in the past (see \cite{AKM,Peters,Virili12,SVV} and references there). In particular, the possibility to define $\phi$-inert subobjects has recently turned out to be a very helpful tool for the study of the dynamical properties of the given endomorphism $\phi$. The leading example is the so-called  {\em intrinsic entropy} $\ient$. It was introduced in \cite{DGBV} to obtain a dynamical invariant able to treat also endomorphisms of torsion-free
abelian groups where other entropy functions vanish completely for the lack of non-trivial  finite subgroups. Afterwards, the {\em intrinsic valuation entropy} $\ient_v$ was introduced in \cite{SV} with the aim of extending  $\ient$ to the context of modules over a non-discrete valuation domain and also the algebraic entropy for locally linearly compact vector spaces defined in \cite{CGB} has the same ``intrinsic'' flavour. Therefore, going down the same path, one defines the {\it intrinsic dimension entropy} $\ientd$ for linear endomorphisms by 
\begin{equation}
\ientd(\phi)=\sup_{U\in\mathcal{I}_\phi(V)}\ientd(\phi,U),
\end{equation}
where $\ientd(\phi,U):=\lim_{n\in\N}\frac{1}{n}\dim_K\left(\frac{T_n(\phi,U)}{U}\right)$ and 
$T_n(\phi,U):=U+\phi U+\ldots+\phi^{n-1}U$ for $n\in\N$ (the existence of the limit is not trivial; an easy adaption of the argument of \cite[Proposition~3.1]{CGB} provides a proof).

In this new context, Corollary~\ref{cor2} can be then used to compute  the intrinsic dimension entropy of the two sided Bernoulli shift $\beta_K$, which turns out to equal 1. Indeed, Corollary~\ref{cor2} and a limit-free formula as in \cite{CGB,SV}  provide $\ientd(\beta_K)=\dim_K(V^-/\beta_K^{-1}(V^-))=1$. 

Quite remarkably, $\phi$-inert subspaces do not enrich the dynamics of linear flows like $\phi$-inert subgroups do in the framework of abelian groups (see \cite{DGBV}).
Indeed, one verifies that $\ientd(\phi)=\textup{ent}_{\textup{dim}}(\phi)$ for every $\phi\colon V\to V$, where
$$\textup{ent}_{\textup{dim}}(\phi):=\sup\left\{\lim_{n\to\infty}\frac{\dim_K(T_n(\phi,F))}{n}\mid F\leq V\ \text{and}\ \dim_K(F)<\infty\right\},$$
which is a classical  entropy function for vector spaces and their endomorphisms (details about this entropy function can be found in \cite{GBS}). Indeed, since every finite-dimensional subspace is $\phi$-inert, one easily has $\ientd(\phi)\geq\textup{ent}_{\textup{dim}}(\phi)$. Conversely,
proceeding as in \cite[Lemma 3.9]{CGB}, for every $U\in\mathcal{I}_\phi(V)$ one can find a finite-dimensional subspace $F_U$ such that $T_n(\phi,U)=U+T_n(\phi,F_U)$. Consequently, $\dim_K(T_n(\phi,U)/U)\leq \dim_K(T_n(\phi,F_U))$ and $\ientd(\phi)\leq \textup{ent}_{\textup{dim}}(\phi)$. 

In other words, there are always enough finite-dimensional linear subspaces.
%%%%%%%
%%%%%%%%%%%%%%
%%%%%%%%%%%%%%

\section*{Acknowledgement}
I.C. and P.H.K. were partially supported by EPSRC grant EP/N007328/1. G.C.C. was supported by ERC grant 336983 and the Basque government grant IT974-16.

\bibliography{CCCKBIB}
\bibliographystyle{abbrv}

\Addresses

\end{document}